\newcommand{\R}{\mathbb{R}}
\newcommand{\N}{\mathbb{N}}
\newcommand{\Z}{\mathbb{Z}}
\renewcommand{\P}{\mathbb{P}}
\newcommand{\essinf}{\mathop{\mathrm{essinf}}\nolimits}
\newcommand{\eps}{\varepsilon}
\theoremstyle{plain}
\newtheorem{theorem}{Theorem}[section]
\newtheorem{lemma}{Lemma}[section]
\newtheorem{proposition}{Proposition}[section]
\theoremstyle{definition}
\theoremstyle{remark}
\newtheorem{remark}{Remark}[section]
\begin{document}

\title{Ergodic Properties of Max--Infinitely Divisible Processes}
\author{Zakhar Kabluchko\footnote{Corresponding author} \footnote{Institut f\"ur Mathematische Stochastik, Georg-August-Universit\"at G\"ottingen, Goldschmidtstra\ss e 7, D-37077, G\"ottingen, Germany. Email: kabluch@math.uni-goettingen.de}, Martin Schlather\footnote{Institut f\"ur Mathematische Stochastik, Georg-August-Universit\"at G\"ottingen, Goldschmidtstra\ss e 7, D-37077, G\"ottingen, Germany. Email: schlather@math.uni-goettingen.de}}


\maketitle

\begin{abstract}
We prove that a stationary max--infinitely divisible process is mixing (ergodic) iff its dependence function converges  to $0$ (is Cesaro summable to $0$). These criteria are applied to some classes of max--infinitely divisible processes.
\end{abstract}
\noindent \textit{Keywords}: Max--infinitely divisible processes, max--stable processes, ergodicity, mixing, codifference\\
\textit{AMS 2000 Subject Classification}: Primary, 60G70; Secondary, 60G10

\section{Introduction and statement of results}\label{sec:intro}
Given a class of stationary stochastic processes, an important question is how to characterize ergodicity and mixing for members of this class. A classical result of this type is due to \citet{maruyama49}, who showed that a stationary Gaussian process $X$ is mixing iff its covariance function $r$ satisfies $\lim_{t\to\infty}r(t)=0$, and ergodic if its spectral measure has no atoms (see e.g.~\cite{cornfeld_etal_book}). Later, \citet{maruyama70} studied the more general class of stationary infinitely--divisible (i.d.)\ processes and gave an easy verifiable criterion for mixing in terms of the two--dimensional distributions of the process.
The results of~\cite{maruyama70} were used by a number of authors~\cite{cambanis_etal87,gross_robertson93,gross94,podgorski92}
to obtain various characterizations of ergodicity and mixing for stationary $\alpha$--stable processes.
A final simplification of these conditions was achieved by \citet{rosinski_zak96,rosinski_zak97} who obtained ergodicity and mixing criteria for stationary i.d.\ processes in terms of the so--called codifference function, a natural coefficient measuring the dependence within an i.d.\ process. A criterion for ergodicity in terms of the spectral representation of the process was established in~\cite{samorodnitsky05}, see also~\cite{roy07}.

The aim of the present paper is to give simple necessary and sufficient conditions for ergodicity and mixing of stationary \textit{max--infinitely divisible} (max--i.d.)\ processes. A stochastic process $X=\{X(t), t\in \Z\}$ is called max--i.d.\ if for every $n\in \N$ it can be represented as a maximum, taken componentwise, of $n$ independent identically distributed stochastic processes. Equivalently, $X$ is max--i.d.\ if all its finite--dimensional distributions belong to the class of multivariate max--i.d.\ distributions. Max--i.d.\ distributions (and processes) were introduced  in~\cite{balkema_resnick77} and studied in~\cite{vatan85,resnick_book,gine_etal90}.

A major role in our characterization will be played by a dependence coefficient $\tau_a(t)$  which is defined as follows.
Let a stationary max--i.d.\ process $\{X(t), t\in\Z\}$ be given. Define $l=\essinf X(0)\in[-\infty,\infty)$.  For $t\in\Z$ and $a>l$ we define the coefficient $\tau_a(t)$ measuring the dependence between $X(0)$ and $X(t)$ by
\begin{equation}\label{eq:def_tau}
\tau_{a}(t)=\log \P[X(0)\leq a, X(t)\leq a]-\log \P[X(0)\leq a]-\log \P[X(t)\leq a].
\end{equation}
Note that by stationarity of $X$, $\tau_a(t)=\tau_a(-t)$.

The next two theorems are our main results. We refer to~\cite{cornfeld_etal_book} for the basic notions from the ergodic theory.

\begin{theorem}\label{theo:mix}
Let $\{X(t), t\in\Z\}$ be a stationary max--i.d.\ process with dependence function $\tau_a(t)$.
Then the following conditions are equivalent:
\begin{enumerate}
\item \label{p:mix1} $X$ is mixing.
\item \label{p:mix1a} $X$ is mixing of all orders.
\item \label{p:mix2} For every $a>l$, $\lim_{t\to\infty}\tau_a(t)=0$.
\end{enumerate}
\end{theorem}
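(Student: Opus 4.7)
My plan is to prove the cycle $(\ref{p:mix1a})\Rightarrow(\ref{p:mix1})\Rightarrow(\ref{p:mix2})\Rightarrow(\ref{p:mix1a})$, with the first two implications being quick and the last one containing all the work. The implication $(\ref{p:mix1a})\Rightarrow(\ref{p:mix1})$ is immediate, since mixing is just the two-fold case of mixing of all orders. For $(\ref{p:mix1})\Rightarrow(\ref{p:mix2})$ I apply the mixing condition to $A=B=\{x:x(0)\leq a\}$ with $a>l$ (so $\P(A)>0$ by definition of the essential infimum), obtaining $\P[X(0)\leq a,X(t)\leq a]\to \P[X(0)\leq a]^2$; dividing and taking logarithms turns this into $\tau_a(t)\to 0$ via \eqref{eq:def_tau}.

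For the main implication $(\ref{p:mix2})\Rightarrow(\ref{p:mix1a})$ I would invoke the Poisson point process representation of max--i.d.\ processes (see~\cite{gine_etal90,resnick_book}): write $X(s) = l \vee \max_i f_i(s)$ for a Poisson point process $\{f_i\}$ with intensity measure $\mu$ on a suitable function space, under which
\[
\P[X(s)\leq a_s,\; s\in F] = \exp\bigl(-\mu(A_F)\bigr),\qquad A_F := \{f : f(s) > a_s \text{ for some } s \in F\},
\]
for every finite $F\subset\Z$ and thresholds $a_s > l$. Since events of the form $\{x(s)\leq a_s,\,s\in F\}$ form a $\pi$--system generating the cylinder $\sigma$--algebra, a standard $\pi$--$\lambda$ argument reduces mixing of all orders to showing, for arbitrary $k\geq 2$, arbitrary such cylinders $B_1,\ldots, B_k$, and arbitrary times $t_1<\cdots<t_k$ with $\min_j(t_{j+1}-t_j)\to\infty$, that
\[
\mu\Bigl(\bigcup_{j=1}^k A^{(t_j)}_{F_j}\Bigr) \longrightarrow \sum_{j=1}^k \mu(A_{F_j}),
\]
where $A^{(t)}_F$ denotes $A_F$ shifted by $t$. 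The Poisson representation thus turns a multiplicative question about probabilities into an additive one about $\mu$--measures.

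By inclusion--exclusion on the left-hand side, it is then enough to show that every intersection $\mu\bigl(\bigcap_{j\in J} A^{(t_j)}_{F_j}\bigr)$ with $|J|\geq 2$ tends to $0$; monotonicity of measure reduces this to pairs, and each pair decomposes into a finite union of two-point events $\{f : f(u_1+t_i)>a,\; f(u_2+t_j)>a'\}$ with $u_1\in F_i$, $u_2\in F_j$. The decisive observation, and the one that invokes hypothesis (\ref{p:mix2}) directly, is the identity
\[
\tau_a(t)=\mu(\{f:f(0)>a,\,f(t)>a\}),
\]
obtained from two-point inclusion--exclusion against $\mu$; together with monotonicity in the threshold it yields
\[
\mu(\{f : f(u_1+t_i)>a,\, f(u_2+t_j)>a'\}) \leq \tau_{\min(a,a')}\bigl(|u_2+t_j - u_1 - t_i|\bigr) \longrightarrow 0,
\]
since $u_2+t_j-u_1-t_i\to\infty$ as $t_j-t_i\to\infty$ for fixed $F_i, F_j$. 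I expect the main conceptual obstacle to be not any single computation but the recognition that the Poisson representation is what translates mixing of all orders into pairwise decoupling of $\mu$: once $\mu$ replaces probabilities, higher-order intersections are automatically dominated by pairwise ones, and the hypothesis $\tau_a(t)\to 0$ is exactly the pairwise decoupling of $\mu$ needed to close the argument.
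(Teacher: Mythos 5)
Your proposal is correct and follows essentially the same route as the paper: the identity $\tau_a(t)=\mu(\{f:f(0)>a,\,f(t)>a\})$ is the paper's Lemma~\ref{lem:tau_exp_meas}, and your Bonferroni/subadditivity bound on the intensity (exponent) measure, decomposing each pairwise intersection into two-point exceedance events dominated by $\tau_{a}$ of a growing lag, is exactly the mechanism behind the paper's key Lemma~\ref{lem:ineq}. The only organizational difference is that you treat mixing of all orders by a direct inclusion--exclusion over all $k$ blocks at once, whereas the paper establishes the two-sided inequality $\P[A]\P[B]\leq\P[A\cap B_t]\leq e^{\sum\tau}\,\P[A]\P[B]$ for two cylinder events and then inducts on the order.
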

\begin{theorem}\label{theo:erg}
Let $\{X(t), t\in\Z\}$ be a stationary max--i.d.\ process with dependence function $\tau_a(t)$.
Then the following conditions are equivalent:
\begin{enumerate}
\item \label{p:erg1} $X$ ergodic.
\item \label{p:erg2} $X$ is weakly mixing.
\item \label{p:erg3} For every $a>l$, $\lim_{n\to\infty}\frac{1}{n}\sum_{t=1}^n\tau_a(t)=0$.
\end{enumerate}
\end{theorem}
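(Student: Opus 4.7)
The plan is to close the cycle (2) $\Rightarrow$ (1) $\Rightarrow$ (3) $\Rightarrow$ (2). The first implication is part of the general ergodic theory of measure-preserving systems---weak mixing always implies ergodicity---so no work is needed there.

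For (1) $\Rightarrow$ (3), I apply the Cesaro-mixing formulation of ergodicity to $A = B = \{X(0) \leq a\}$ with $a > l$, obtaining
\begin{equation*}
\frac{1}{n}\sum_{t=1}^{n} \P[X(0) \leq a,\, X(t) \leq a] \longrightarrow \P[X(0) \leq a]^{2},
\end{equation*}
which by the definition of $\tau_a$ rewrites as $\frac{1}{n}\sum_{t=1}^n e^{\tau_a(t)} \to 1$. Since max--i.d.\ processes are positively associated, $\tau_a(t) \geq 0$, and the elementary inequality $e^{x} - 1 \geq x$ for $x \geq 0$ then forces $\frac{1}{n}\sum_{t=1}^{n} \tau_a(t) \to 0$.

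The main work is in (3) $\Rightarrow$ (2). I would invoke the Poisson spectral representation of max--i.d.\ processes (Balkema--Resnick, Gin\'e--Hahn--Vatan), writing $X(t) = l \vee \max_i f_i(t)$ with $\{f_i\}$ a Poisson point process on a suitable function space with $\sigma$-finite intensity $\mu$. For cylinder events $A = \bigcap_{i=1}^{m}\{X(s_i) \leq a_i\}$ and $B = \bigcap_{j=1}^{k}\{X(u_j) \leq b_j\}$ with all thresholds above $l$, void probabilities for the Poisson process give
\begin{equation*}
\P(A \cap T^{-t}B) = \P(A)\,\P(B)\,\exp\bigl(\mu(E_A \cap E_B^{t})\bigr),
\end{equation*}
where $E_A = \{f : f(s_i) > a_i \text{ for some } i\}$ and $E_B^{t}$ is the analogous set with all arguments shifted by $t$. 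A union bound together with the monotonicity of $\tau_c$ in $c$ and the stationarity of $\mu$ yield
\begin{equation*}
0 \leq \mu(E_A \cap E_B^{t}) \leq \sum_{i=1}^{m}\sum_{j=1}^{k} \tau_{c_{ij}}(u_j + t - s_i),\qquad c_{ij} := \min(a_i, b_j),
\end{equation*}
so Cesaro-averaging in $t$ and applying hypothesis (3) term-by-term gives $\frac{1}{n}\sum_{t=1}^{n} \mu(E_A \cap E_B^{t}) \to 0$. Since this quantity is bounded above by $\mu(E_A) < \infty$, the Lipschitz property of $x \mapsto e^{x} - 1$ on the bounded interval $[0, \mu(E_A)]$ upgrades this to $\frac{1}{n}\sum_{t=1}^{n}(e^{\mu(E_A \cap E_B^{t})} - 1) \to 0$, i.e.\ $\frac{1}{n}\sum_{t=1}^{n}|\P(A \cap T^{-t}B) - \P(A)\P(B)| \to 0$. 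A standard $L^{2}$-density argument (indicators of cylinder events span a dense subalgebra of $L^{2}(\sigma(X),\P)$) then extends this weak-mixing identity from cylinder events to arbitrary measurable events.

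The principal obstacle is the step (3) $\Rightarrow$ (2): setting up the spectral representation carefully, especially handling the case $l > -\infty$, and justifying the reduction of the multi-point functional $\mu(E_A \cap E_B^{t})$ to the two-point functionals $\tau_c$ through a union bound and the monotonicity of $c \mapsto \tau_c$. The final $L^{2}$ approximation to pass from cylinders to arbitrary events is routine but must be invoked explicitly.
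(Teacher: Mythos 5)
Your proposal is correct and follows essentially the same route as the paper: the key identity $\P(A\cap B_t)=\P(A)\P(B)\exp(\mu(E_A\cap E_B^t))$ with the union bound $\mu(E_A\cap E_B^t)\leq\sum_{i,j}\tau_{c_{ij}}(\cdot)$ is exactly the paper's Lemma~\ref{lem:ineq} (derived there from finite--dimensional exponent measures rather than the functional Poisson representation, which spares you the case $l>-\infty$), and your linear upper/lower bounds for $e^x$ on a bounded interval are the paper's Lemma~\ref{lem:cesaro}. The only organizational difference is that you close the cycle via the trivial implication weak mixing $\Rightarrow$ ergodicity, whereas the paper gets the equivalence \ref{p:erg1}$\Leftrightarrow$\ref{p:erg2} directly from the positivity $\P(A\cap B_t)\geq\P(A)\P(B)$; both are fine.
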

For max--stable processes (which form a subclass of the class of max--i.d.\ processes), ergodicity and mixing were studied in~\cite{weintraub91,stoev07,kabluchko_extremes}. We will see in Section~\ref{sec:max_stab} below that our Theorem~\ref{theo:mix} characterizing mixing is a generalization of a max--stable result of~\cite[Theorem~3.3]{stoev07}, whereas Theorem~\ref{theo:erg} characterizing ergodicity is new even in the max--stable case.

\begin{remark}
For simplicity, we restrict ourselves to stochastic processes indexed by $\Z$. All results hold for stochastically continuous processes on $\R$, the proofs being the same.
\end{remark}

The rest of the paper is organized as follows.
Theorem~\ref{theo:mix} and Theorem~\ref{theo:erg} will be proved in Section~\ref{sec:proof}. In Section~\ref{sec:examples} we apply our results to some particular classes of max--i.d.\ processes.

\section{Proof of Theorem~\ref{theo:mix} and Theorem~\ref{theo:erg}}\label{sec:proof}
\subsection{Notation}\label{sec:facts_proof}
We start by recalling some facts about multivariate max--i.d.\ laws, see~\cite{resnick_book} or~\cite{vatan85} for more information. A $d$--variate random vector $X=(X_1,\ldots,X_d)$ is called max--i.d.\ if for every $n\in\N$ it can be represented as a maximum (taken componentwise) of $n$ independent identically distributed random vectors.
Let $F(y)=\P[X \leq y]$, where $y=(y_1,\ldots,y_d)\in\R^d$, be the distribution function of a max--i.d.\ random vector $X$. Then there is a measure $Q$ on $\bar \R^d=[-\infty,\infty)^d$, called the exponent measure of $X$, such that
\begin{equation}\label{eq:exp_meas}
F(y)=e^{-Q([-\infty,y]^c)} \;\;\; \forall y>\essinf X,
\end{equation}
where $[-\infty,y]=[-\infty, y_1]\times\ldots\times[-\infty,y_d]$,  the superscript $c$ denotes the complement in $\bar \R^d$, and the $\essinf$ is taken componentwise.
\begin{remark}\label{rem:marginals}
The exponent measure of some marginal of $X$, say $(X_1,\ldots, X_m)$, where $m\leq d$, is the projection of the  measure $Q$ onto the first $m$ coordinates (see e.g.~\cite{vatan85}).
\end{remark}

Let $\{X(t),t\in\Z\}$ be a stationary max--i.d.\ process. Given $t_1,\ldots, t_k\in\Z$, we denote by $Q_{t_1,\ldots,t_k}$ the exponent measure of the max--i.d.\ random vector $(X(t_1),\ldots, X(t_k))$. Recall that $l=\essinf X_t$.
\begin{lemma}\label{lem:tau_exp_meas}
For all $a>l$ and $t\in\Z$, $\tau_a(t)=Q_{0,t}((a,+\infty)\times (a,+\infty))$.
\end{lemma}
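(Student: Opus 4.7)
The plan is a direct application of the exponent measure representation~\eqref{eq:exp_meas} together with the marginal projection property from Remark~\ref{rem:marginals} and inclusion--exclusion. Fix $a>l$. By stationarity $\essinf X(0)=\essinf X(t)=l$, so $(a,a)>\essinf (X(0),X(t))$ componentwise and~\eqref{eq:exp_meas} applies to the bivariate vector $(X(0),X(t))$, yielding
\[
\P[X(0)\leq a,X(t)\leq a]=\exp\bigl(-Q_{0,t}([-\infty,a]\times[-\infty,a])^c\bigr),
\]
where the complement is taken in $\bar\R^2$.

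Next I would handle the one-dimensional marginals via Remark~\ref{rem:marginals}. The exponent measure of $X(0)$ is the pushforward of $Q_{0,t}$ under the first-coordinate projection, and similarly for $X(t)$. Hence, again by~\eqref{eq:exp_meas},
\[
\log\P[X(0)\leq a]=-Q_{0,t}\bigl((a,\infty)\times[-\infty,\infty)\bigr),\qquad \log\P[X(t)\leq a]=-Q_{0,t}\bigl([-\infty,\infty)\times(a,\infty)\bigr).
\]

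Finally, writing the complement as a union
\[
\bigl([-\infty,a]\times[-\infty,a]\bigr)^c=\bigl((a,\infty)\times[-\infty,\infty)\bigr)\cup\bigl([-\infty,\infty)\times(a,\infty)\bigr),
\]
whose intersection is $(a,\infty)\times(a,\infty)$, inclusion--exclusion applied to the (finite, since $a>l$) measure $Q_{0,t}$ restricted to this union gives
\[
Q_{0,t}\bigl(([-\infty,a]\times[-\infty,a])^c\bigr)=Q_{0,t}\bigl((a,\infty)\times[-\infty,\infty)\bigr)+Q_{0,t}\bigl([-\infty,\infty)\times(a,\infty)\bigr)-Q_{0,t}\bigl((a,\infty)\times(a,\infty)\bigr).
\]
Substituting into the definition~\eqref{eq:def_tau} of $\tau_a(t)$ yields the claim.

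There is no real obstacle; the only point requiring a brief justification is that all three exponent-measure quantities entering inclusion--exclusion are finite, which follows because $a>l$ ensures $\P[X(0)\leq a]>0$ and $\P[X(t)\leq a]>0$, so the negative logarithms are finite. I would mention this in one sentence to ensure the arithmetic of inclusion--exclusion is legitimate in $[0,\infty]$.
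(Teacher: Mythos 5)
Your proof is correct and takes essentially the same route as the paper's: apply the exponent-measure representation~\eqref{eq:exp_meas} to the bivariate vector $(X(0),X(t))$ and to its marginals via Remark~\ref{rem:marginals}, then use inclusion--exclusion on the complement of $[-\infty,a]^2$. The paper leaves the inclusion--exclusion step and the finiteness check implicit; you spell both out, which is fine.
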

\begin{proof}
By~\eqref{eq:exp_meas}, we have
$$
\P[X(0)\leq a, X(t)\leq a]=e^{-Q_{0,t}(\bar \R^2\backslash[-\infty,a]^2)}.
$$
Again by~\eqref{eq:exp_meas} and by Remark~\ref{rem:marginals},
$$
\P[X(0)\leq a]=e^{-Q_0((a,\infty))}=e^{-Q_{0,t}((a,\infty)\times\bar \R)}.
$$
Similarly, $\P[X(t)\leq a]=e^{-Q_{0,t}(\bar \R\times (a,\infty))}$.
Inserting this into~\eqref{eq:def_tau}, we obtain the statement of the lemma.
\end{proof}

Fix some $k\in\N$, some integers $t_1',\ldots,t_k'$ and $t_1'',\ldots,t_k''$, and some  $y'=(y_1',\ldots,y_k')\in(l,\infty)^d$, $y''=(y_1'',\ldots,y_k'')\in(l,\infty)^d$.  Define the random events $A$ and $B$ by
\begin{align}
A&=\{X(t_i')\leq y_i', i=1,\ldots,k\},\label{eq:def_A} \\
B&=\{X(t_i'')\leq y_i'', i=1,\ldots,k\}. \label{eq:def_B}
\end{align}
Further, for $t\in\Z$, we define
\begin{equation}\label{eq:def_Bt}
B_t=\{X(t_i''+t)\leq y_i'', i=1,\ldots,k\}.
\end{equation}

\subsection{Basic lemma}
Our proofs will be based on the following lemma.
\begin{lemma}\label{lem:ineq}
Let $a$ be the smallest of the numbers $y_i',y_j''$ ($i,j=1,\ldots,k$). Then for every $t\in\Z$, we have
\begin{equation}\label{eq:lem}
\P[A]\P[B]\leq \P[A\cap B_t]\leq \exp\left(\sum_{i=1}^k \sum_{j=1}^k \tau_{a}(t+t_j''-t_i')\right) \P[A]\P[B].
\end{equation}
\end{lemma}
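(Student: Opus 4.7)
The plan is to prove the lemma by reading everything off the exponent measure of the joint max--i.d.\ vector
\[
Y=\bigl(X(t_1'),\ldots,X(t_k'),\, X(t_1''+t),\ldots,X(t_k''+t)\bigr)
\]
on $\bar\R^{2k}$. Write $Q$ for this exponent measure, and let the ``excess'' sets be
\[
C_A=\bigcup_{i=1}^k\{u_i'>y_i'\},\qquad
C_B=\bigcup_{j=1}^k\{u_j''>y_j''\},
\]
where $(u_1',\ldots,u_k',u_1'',\ldots,u_k'')$ are the coordinates in $\bar\R^{2k}$. By~\eqref{eq:exp_meas} we have $\P[A\cap B_t]=\exp(-Q(C_A\cup C_B))$. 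By the same formula together with Remark~\ref{rem:marginals} (projection of $Q$ onto the first $k$, respectively last $k$, coordinates), $\P[A]=\exp(-Q(C_A))$ and, using stationarity, $\P[B]=\P[B_t]=\exp(-Q(C_B))$.

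Inclusion--exclusion gives $Q(C_A\cup C_B)=Q(C_A)+Q(C_B)-Q(C_A\cap C_B)$, so
\[
\P[A\cap B_t]=\P[A]\P[B]\cdot\exp\bigl(Q(C_A\cap C_B)\bigr).
\]
The lower bound in~\eqref{eq:lem} is then immediate, because $Q$ is a (non--negative) measure, hence $Q(C_A\cap C_B)\geq 0$.

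For the upper bound, decompose
\[
C_A\cap C_B=\bigcup_{i,j=1}^k\{u_i'>y_i',\,u_j''>y_j''\},
\]
so by subadditivity of $Q$,
\[
Q(C_A\cap C_B)\leq\sum_{i=1}^k\sum_{j=1}^k Q\bigl(\{u_i'>y_i',\,u_j''>y_j''\}\bigr).
\]
Each summand is, by Remark~\ref{rem:marginals} applied to the projection onto the pair of coordinates corresponding to times $t_i'$ and $t_j''+t$, precisely $Q_{t_i',\,t_j''+t}\bigl((y_i',\infty)\times(y_j'',\infty)\bigr)$. Since $a\leq y_i'$ and $a\leq y_j''$ by the choice of $a$, monotonicity gives
\[
Q_{t_i',\,t_j''+t}\bigl((y_i',\infty)\times(y_j'',\infty)\bigr)\leq Q_{t_i',\,t_j''+t}\bigl((a,\infty)\times(a,\infty)\bigr),
\]
and stationarity together with Lemma~\ref{lem:tau_exp_meas} identifies the right--hand side with $\tau_a(t+t_j''-t_i')$. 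Summing over $i,j$ and exponentiating yields the upper bound in~\eqref{eq:lem}.

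The only subtlety I anticipate is bookkeeping: making sure the projection statement in Remark~\ref{rem:marginals} is being applied to the correct pair of coordinates (and that it is still valid in the possibly degenerate case $t_i'=t_j''+t$, which just means the pair is supported on the diagonal of $\bar\R^2$), and verifying that all the hypotheses $y_i',y_j''>l$ together with the definition of $a$ keep us in the range $a>l$ where~\eqref{eq:exp_meas} and Lemma~\ref{lem:tau_exp_meas} are applicable. Once this is checked, the argument is purely a union bound on exponent measures.
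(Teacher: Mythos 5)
Your proposal is correct and follows essentially the same route as the paper: rewrite all three probabilities via the exponent measure of the joint $2k$--dimensional vector, apply inclusion--exclusion to the union of the two ``exceedance'' sets, and bound the intersection term by a union bound over the $k^2$ pairs, using the marginal-projection property, monotonicity, and stationarity to reduce each term to $\tau_a(t+t_j''-t_i')$. The only cosmetic difference is that you derive both inequalities from the single identity $\P[A\cap B_t]=\P[A]\P[B]\exp(Q(C_A\cap C_B))$, whereas the paper treats the lower bound by subadditivity separately; the substance is identical.
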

\begin{remark}
The above lemma is similar to an inequality due to \citet{lebowitz72} in the form stated in~\cite[Theorem~2]{newman82}. The basic fact which lies behind the proof of the lemma is that the process $X$, being max--i.d., is positively associated. For the latter result, see~\cite{resnick_book}.
\end{remark}
\begin{proof}[Proof of Lemma~\ref{lem:ineq}]
To shorten the notation, we define the measures $\Lambda'$ and $\Lambda''$ on $\bar \R^k$ by
$$
\Lambda'=Q_{t_1',\ldots, t_k'},\;\;\; \Lambda''=Q_{t_1'',\ldots, t_k''}.
$$
Let also $\Lambda_t$ be a measure on $\bar \R^{2k}$ defined by
$$
\Lambda_t=Q_{t_1',\ldots, t_k', t_1''+t,\ldots, t_k''+t}.
$$
Write $y=(y_1',\ldots,y_k',y_1'',\ldots,y_k'')$.
Define the sets $E', E'' \subset \bar \R^k$ and $E\subset \bar \R^{2k}$ by
$$
E'=[-\infty, y']^c,\;\;\; E''=[-\infty, y'']^c,\;\;\; E=[-\infty, y]^c.
$$
Note that $E=(E'\times \bar \R^k )\cup (\bar \R^k\times E'')$. Therefore,
$$
\Lambda_t(E) \leq \Lambda_t(E'\times \bar \R^k)+\Lambda_t(\bar \R^k \times E'')=\Lambda'(E')+\Lambda''(E'').
$$
Note that the last equality follows from Remark~\ref{rem:marginals}. By~\eqref{eq:exp_meas}, we have $\P[A\cap B_t]=e^{-\Lambda_t(E)}$, $\P[A]=e^{-\Lambda'(E')}$, and $\P[B]=e^{-\Lambda''(E'')}$. It follows that for every $t\in\Z$,
$$
\P[A\cap B_t]=e^{-\Lambda_t(E)}\geq e^{-(\Lambda'(E')+\Lambda''(E''))}=\P[A]\P[B],
$$
which proves the first inequality in~\eqref{eq:lem}.

On the other hand, we have
\begin{equation}\label{eq:ineq_bonfer}
\Lambda_t(E)=\Lambda_t(E'\times \bar \R^k)+\Lambda_t(\bar \R^k\times E'')-\Lambda_t(F)=\Lambda'(E')+\Lambda''(E'')-\Lambda_t(F),
\end{equation}
where $F=(E'\times \bar \R^k)\cap (\bar \R^k\times E'')$. For $i,j\in\{1,\ldots,k\}$ let
$$
F_{ij}=\{(z_1',\ldots, z_k', z_1'',\ldots, z_k'')\in\bar\R^{2k}: z_i'>y_i', z_j''>y_j''\}.
$$
Then $F= \cup_{i=1}^k\cup_{j=1}^k F_{ij}$ and $\Lambda_t(F_{ij})=Q_{t_i',t_j''+t}((y_i',+\infty)\times (y_j'',+\infty))$ (again by Remark~\ref{rem:marginals}).
Hence,
$$
\Lambda_t(F)
\leq \sum_{i=1}^k\sum_{j=1}^k \Lambda_t(F_{ij})=
\sum_{i=1}^k\sum_{j=1}^k Q_{t_i',t_j''+t}((y_i',+\infty)\times (y_j'',+\infty)).
$$
Recall that $a\leq y_i'$ and $a\leq y_j''$. Using stationarity of $X$ and Lemma~\ref{lem:tau_exp_meas}, we may write
$$
\Lambda_t(F)\leq \sum_{i=1}^k\sum_{j=1}^k Q_{t_i',t_j''+t}((a,+\infty)\times (a,+\infty))= \sum_{i=1}^k \sum_{j=1}^k \tau_{a}(t+t_j''-t_i').
$$
Inserting this into~\eqref{eq:ineq_bonfer} and recalling that $\P[A]=e^{-\Lambda'(E')}$ and $\P[B]=e^{-\Lambda''(E'')}$, we obtain
\begin{align*}
\P[A\cap B_t]
&=e^{-\Lambda_t(E)}\\
&= e^{-\Lambda'(E')-\Lambda''(E'')+\Lambda_t(F)}\\
&\leq e^{\sum_{i=1}^k \sum_{j=1}^k \tau_{a}(t+t_j''-t_i')} \P[A]\P[B].
\end{align*}
This completes the proof of the second inequality in~\eqref{eq:lem}.
\end{proof}

\subsection{Proofs}
\begin{proof}[Proof of Theorem~\ref{theo:mix}.]
Recall that the process $X$ is mixing iff for every random events $A$, $B$ as in~\eqref{eq:def_A}, \eqref{eq:def_B},
\begin{equation}\label{eq:cond_mix}
\lim_{t\to\infty} \P[A\cap B_t]=\P[A]\P[B].
\end{equation}
Suppose first that $X$ is mixing. By~\eqref{eq:cond_mix} with $A=B=\{X(0)\leq a\}$, where $a>l$, this implies that
$$
\lim_{t\to\infty} \P[X(0)\leq a, X(t)\leq a]=\P[X(0)\leq a]\P[X(t)\leq a].
$$
Taking the logarithm and recalling~\eqref{eq:def_tau}, we obtain $\lim_{t\to\infty}\tau_a(t)=0$. This proves the implication \ref{p:mix1}$\Rightarrow$\ref{p:mix2} of the theorem.

To prove the implication \ref{p:mix2}$\Rightarrow$\ref{p:mix1}, suppose that $\lim_{t\to\infty}\tau_a(t)=0$ for every $a>l$. Let $A$, $B$ be arbitrary as in~\eqref{eq:def_A}, \eqref{eq:def_B}. Note that
$$
\lim_{t\to \infty}\sum_{i=1}^k \sum_{j=1}^k \tau_{a}(t+t_j''-t_i')=0.
$$
Applying Lemma~\ref{lem:ineq}, we see that~\eqref{eq:cond_mix} holds. This shows that $X$ is mixing and completes the proof of the implication \ref{p:mix2}$\Rightarrow$\ref{p:mix1}.

The implication \ref{p:mix1a}$\Rightarrow$\ref{p:mix1} holds trivially. Thus, to complete the proof, we need to prove the implication \ref{p:mix2}$\Rightarrow$\ref{p:mix1a}. Let $A^{(0)}, A^{(1)},\ldots$ be random events of the form
$$
A^{(j)}=\{X(t^{(j)}_i)\leq y_i^{(j)}, i=1,\ldots,k\}
$$
for some $k\in\N$, $t_i^{(j)}\in\Z$, $y_i^{(j)}>l$ ($i=1,\ldots,k$; $j=0,1,\ldots$). For $t\in\Z$ define
$$
A^{(j)}_t=\{X(t^{(j)}_i+t)\leq y_i^{(j)}, i=1,\ldots,k\}.
$$
Recall that the process $X$ is mixing of order $r\in\N$ iff for every $A^{(0)},\ldots, A^{(r)}$ as above, we have
\begin{equation}\label{eq:cond_mix_r}
\lim_{t_1,\ldots,t_r\to+\infty} \P[A^{(0)}\cap A^{(1)}_{t_1} \cap \ldots A^{(r)}_{t_1+\ldots+t_r}]=\P[A^{(0)}] \ldots \P[A^{(r)}].
\end{equation}
Suppose now that Condition~\ref{p:mix2} holds, i.e.\ $\lim_{t\to\infty}\tau_a(t)=0$ for all $a>l$. We have already shown that this implies~\eqref{eq:cond_mix_r} with $r=1$. To handle the general case, we use an induction on $r$.  Assume that we have proved~\eqref{eq:cond_mix_r} for some $r=p\in\N$. Define
$$
A=A(t_1,\ldots,t_p):=A^{(0)}\cap A^{(1)}_{t_1} \cap \ldots A^{(p)}_{t_1+\ldots+t_p}
$$
and $B=A^{(p+1)}$. By Lemma~\ref{lem:ineq},
\begin{equation}\label{eq:188}
\P[A]\P[B]
\leq \P[A\cap B_{t_1+\ldots+t_{p+1}}]
\leq e^{\theta}\P[A]\P[B],
\end{equation}
where $\theta=\theta_{t_1,\ldots,t_{p+1}}$ is defined by
$$
\theta_{t_1,\ldots,t_{p+1}}=\sum_{i'=1}^k\sum_{i''=1}^k \sum_{j=1}^{p} \tau_a((t_1+\ldots+t_{p+1})+t_{i''}^{(p+1)}-(t_1+\ldots+t_j)-t_{i'}^{(j)}).
$$
By Condition~\ref{p:mix2}, we have $\lim_{t_{p+1}\to+\infty}\theta_{t_1,\ldots,t_{p+1}}=0$. Therefore, by~\eqref{eq:188},
$$
\lim_{t_1,\ldots,t_{p+1}\to+\infty} \P[A\cap B_{t_1+\ldots+t_{p+1}}]=\lim_{t_1,\ldots,t_p\to+\infty}\P[A]\P[B]=\P[A^{(0)}] \ldots \P[A^{(p+1)}],
$$
where the second equality follows from the fact that by the inductive assumption, $\lim_{t_1,\ldots,t_p\to+\infty}\P[A]=\P[A^{(0)}] \ldots \P[A^{(p)}]$.
It follows that
$$
\lim_{t_1,\ldots,t_{p+1}\to+\infty} \P[A^{(0)}\cap A^{(1)}_{t_1} \cap \ldots A^{(p+1)}_{t_1+\ldots+t_{p+1}}]=\P[A^{(0)}] \ldots \P[A^{(p+1)}].
$$
This proves~\eqref{eq:cond_mix_r} with $r=p+1$ and completes the proof of the implication \ref{p:mix2}$\Rightarrow$\ref{p:mix1a}.
\end{proof}
To prove Theorem~\ref{theo:erg}, we will need the following elementary lemma.
\begin{lemma}\label{lem:cesaro}
Let $\{\theta_t\}_{t\in\N}$ be a sequence such that  $0\leq \theta_t\leq C$ for some constant $C$ and all $t\in\N$. Then the following statements are equivalent:
\begin{enumerate}
\item For some (equivalently, every) $\kappa>0$, $\lim_{n\to\infty}\frac 1n \sum_{t=1}^{n} e^{\kappa\theta_t}=1$.
\item $\lim_{n\to\infty} \frac 1n \sum_{t=1}^{n} \theta_t=0$.
\end{enumerate}
\end{lemma}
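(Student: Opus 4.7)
The plan is to exploit the two elementary one-sided bounds
\[
1 + \kappa\theta_t \;\le\; e^{\kappa\theta_t} \;\le\; 1 + \kappa\theta_t\,e^{\kappa C}
\]
valid for $\theta_t \in [0,C]$ and any $\kappa>0$. The lower bound is just convexity of the exponential; the upper bound follows from $e^x - 1 = \int_0^x e^u\,du \le x\,e^x$ applied to $x=\kappa\theta_t \le \kappa C$. Summing and dividing by $n$ turns both inequalities into
\[
\frac{\kappa}{n}\sum_{t=1}^n \theta_t \;\le\; \frac{1}{n}\sum_{t=1}^n e^{\kappa\theta_t} - 1 \;\le\; \kappa\,e^{\kappa C}\cdot\frac{1}{n}\sum_{t=1}^n \theta_t.
\]

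From here the implications are immediate. If (ii) holds, the right-hand inequality shows that $\frac{1}{n}\sum_{t=1}^n e^{\kappa\theta_t}\to 1$ for \emph{every} $\kappa>0$, so (i) holds for every $\kappa$. Conversely, if (i) holds for some single $\kappa_0>0$, the left-hand inequality (with $\kappa=\kappa_0$) gives $\limsup_n \frac{1}{n}\sum_{t=1}^n \theta_t \le 0$, and since $\theta_t\ge 0$ we conclude (ii). Combining the two directions shows that (i) for some $\kappa$ is equivalent to (i) for every $\kappa$, and that both are equivalent to (ii).

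I do not anticipate any real obstacle here; the only thing to be careful about is that the upper bound $e^x-1 \le x\,e^x$ really does require $x\ge 0$, which is guaranteed by the hypothesis $\theta_t\ge 0$, and that the boundedness assumption $\theta_t\le C$ is essential for the factor $e^{\kappa C}$ on the right-hand side. Without uniform boundedness the implication (ii)$\Rightarrow$(i) could fail, so the proof should explicitly invoke $\theta_t\le C$ at that step.
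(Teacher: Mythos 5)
Your proof is correct and follows essentially the same route as the paper: both sandwich $e^{\kappa\theta_t}$ between two affine functions of $\theta_t$ (the paper uses the convexity chord $e^{\kappa\theta_t}\leq 1+C^{-1}(e^{\kappa C}-1)\theta_t$ where you use $e^{\kappa\theta_t}\leq 1+\kappa e^{\kappa C}\theta_t$, but the constant is immaterial) and then averages. Nothing is missing.
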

\begin{proof}
Since the exponential function $\theta\mapsto e^{\kappa \theta}$ is convex, we have
$$
1+\kappa\theta_t\leq e^{\kappa\theta_t}\leq 1+ C^{-1}(e^{\kappa C}-1)\theta_t.
$$
The statement of the lemma follows readily.
\end{proof}
\begin{proof}[Proof of Theorem~\ref{theo:erg}]
Recall that the process $X$ is ergodic iff for every events $A$, $B$ as in~\eqref{eq:def_A}, \eqref{eq:def_B},
\begin{equation}\label{eq:cond_erg}
\lim_{n\to\infty}\frac 1n \sum_{t=1}^n \P[A\cap B_t]=\P[A]\P[B].
\end{equation}
Further, $X$ is weakly mixing iff for every $A$, $B$ as in~\eqref{eq:def_A}, \eqref{eq:def_B},
\begin{equation}\label{eq:cond_weak_mix}
\lim_{n\to\infty}\frac 1n \sum_{t=1}^n |\P[A\cap B_t]-\P[A]\P[B]|=0.
\end{equation}
By the first inequality in~\eqref{eq:lem}, conditions~\eqref{eq:cond_erg} and~\eqref{eq:cond_weak_mix} are clearly equivalent. This proves the equivalence \ref{p:erg1}$\Leftrightarrow$\ref{p:erg2} of the theorem.

Suppose now that $X$ is ergodic, which means that~\eqref{eq:cond_erg} holds. Taking $A=B=\{X(0)\leq a\}$ for some $a>l$, we may rewrite~\eqref{eq:cond_erg} in the form
\begin{equation}\label{eq:lim_frac_sum_exp}
\lim_{n\to\infty}\frac 1n \sum_{t=1}^n e^{\tau_a(t)}=1.
\end{equation}
Note that by Lemma~\ref{lem:tau_exp_meas},
\begin{equation}\label{eq:tau_is_bounded}
\tau_a(t)=Q_{0,t}((a,\infty)\times (a,\infty))\leq Q_{0,t}((a,\infty)\times \bar \R)=Q_0((a,\infty)),
\end{equation}
where the last  equality follows from Remark~\ref{rem:marginals}.
This implies that we have $0\leq \tau_a(t)\leq C$ for some constant $C$ and all $t\in\Z$.
An application of Lemma~\ref{lem:cesaro} to~\eqref{eq:lim_frac_sum_exp} yields that $\lim_{n\to\infty}\frac 1n \sum_{t=1}^n \tau_a(t)=0$. This proves the implication \ref{p:erg1}$\Rightarrow$\ref{p:erg3} of the theorem.

Now suppose that Condition~\ref{p:erg3} of the theorem holds, that is
\begin{equation}\label{eq:lim_sum_cesaro_tau}
\lim_{n\to\infty}\frac 1n \sum_{t=1}^n \tau_a(t)=0,\;\;\; \forall a>l.
\end{equation}
Let $A$, $B$ be random events as in~\eqref{eq:def_A}, \eqref{eq:def_B}. By Lemma~\ref{lem:ineq},
we have
\begin{equation}\label{eq:PA_PB_estim}
\P[A]\P[B] \leq \frac 1n  \sum_{t=1}^n\P[A\cap B_t] \leq \P[A]\P[B]\cdot \left (\frac 1n \sum_{t=1}^{n} e^{\theta_t}\right),
\end{equation}
where
$$
\theta_t=\sum_{i=1}^k \sum_{j=1}^k \tau_{a}(t+t_j''-t_i').
$$
By~\eqref{eq:tau_is_bounded}, there is $C$ such that $0\leq \theta_t\leq C$ for all $t\in\Z$.  It follows from~\eqref{eq:lim_sum_cesaro_tau} that $\lim_{n\to\infty} \frac 1n\sum_{t=1}^n \theta_t=0$.
Applying Lemma~\ref{lem:cesaro}, we obtain
$$
\lim_{n\to\infty}\frac 1n \sum_{t=1}^{n} e^{\theta_t}=1.
$$
Together with~\eqref{eq:PA_PB_estim}, this implies that~\eqref{eq:cond_erg} holds. Hence, $X$ is ergodic. This completes the proof of the implication \ref{p:erg3}$\Rightarrow$\ref{p:erg1} and the proof of the theorem.
\end{proof}

\section{Examples}\label{sec:examples}
\subsection{Ergodicity and mixing of max--stable processes}\label{sec:max_stab}
Recall that a stochastic process $X$ is called max--stable if for every $n\in\N$, the maximum of $n$ independent copies of $X$, taken componentwise, has the same law as $X$ up to an affine transformation. In the sequel, let $\{X(t), t\in\Z\}$ be a stationary max--stable process, and suppose that its marginals are $1$--Fr\'echet with unit scale parameter, i.e.\
\begin{equation}\label{eq:stand_marginals}
\P[X(t)\leq y]=e^{-1/y},\;\;\; t\in\Z, y>0.
\end{equation}
There is no loss of generality in assuming~\eqref{eq:stand_marginals}, since stationary max--stable processes with more general marginals can be reduced to the above class by simple transformations.

Our goal in this section is to give criteria for ergodicity and mixing of stationary max--stable processes. Since max--stable processes are max--i.d., Theorem~\ref{theo:mix} and Theorem~\ref{theo:erg} are applicable. We will see that in the max--stable case, the dependence coefficient $\tau_a(t)$ reduces to a natural dependence coefficient $r(t)$ which goes back to~\cite{sibuya60,oliveira62,dehaan85}, and is defined as follows. Let $t\in \Z$. By max--stability of the random vector $(X(0), X(t))$, there is $\varrho(t)>0$ such that
\begin{equation}\label{eq:def_varrho}
\P[X(0)\leq y, X(t)\leq y]=e^{-\varrho(t)/y},\;\;\; y>0.
\end{equation}
Then $r(t)$, a coefficient measuring the dependence between $X(0)$ and $X(t)$, is defined as  $r(t)=2-\varrho(t)$.
We have $r(t)\in[0,1]$, and the cases $r(t)=0$ and $r(t)=1$ correspond to independence and a.s.\ equality, respectively.
\begin{lemma}\label{lem:r_tau}
Let $\{X(t), t\in\Z\}$ be a stationary max--stable process satisfying~\eqref{eq:stand_marginals}. Then for every $a>0$, $\tau_a(t)=r(t)/a$.
\end{lemma}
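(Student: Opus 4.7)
The plan is essentially just to unfold all definitions, since the identity $\tau_a(t) = r(t)/a$ should follow directly from the explicit 1--Fr\'echet forms of the marginal and joint laws.

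First I would write down the logarithms of the three probabilities appearing in~\eqref{eq:def_tau}. The standard 1--Fr\'echet marginal~\eqref{eq:stand_marginals} gives $\log \P[X(0)\leq a] = \log \P[X(t)\leq a] = -1/a$. The max--stable joint law~\eqref{eq:def_varrho}, evaluated at $y = a > 0$, gives $\log \P[X(0)\leq a, X(t)\leq a] = -\varrho(t)/a$.

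Then I would substitute these three values into the definition~\eqref{eq:def_tau} of $\tau_a(t)$ to obtain
\[
\tau_a(t) = -\frac{\varrho(t)}{a} - \left(-\frac{1}{a}\right) - \left(-\frac{1}{a}\right) = \frac{2 - \varrho(t)}{a} = \frac{r(t)}{a},
\]
which is the claimed identity. Note that because the 1--Fr\'echet marginals are supported on $(0,\infty)$, we have $l = \essinf X(0) = 0$, so restricting to $a > 0$ is consistent with the standing assumption $a > l$ under which $\tau_a(t)$ was defined.

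There is no real obstacle here: the statement is a one-line substitution, and the only thing worth double-checking is that $\varrho(t)$ is well-defined (which is given by max--stability of the pair $(X(0),X(t))$, the hypothesis stated just before~\eqref{eq:def_varrho}) and that the value $y = a$ lies in the range $y > 0$ where formula~\eqref{eq:def_varrho} applies.
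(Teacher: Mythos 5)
Your proposal is correct and is essentially the paper's own proof: the paper likewise obtains $\tau_a(t)=(2-\varrho(t))/a=r(t)/a$ by substituting~\eqref{eq:stand_marginals} and~\eqref{eq:def_varrho} into~\eqref{eq:def_tau}. Your additional remarks (that $l=0$ and that $y=a>0$ is in the valid range) are sound but routine.
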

\begin{proof}
It follows from~\eqref{eq:def_tau} combined with~\eqref{eq:stand_marginals}, \eqref{eq:def_varrho} that $\tau_a(t)=(2-\varrho(t))/a=r(t)/a$.
\end{proof}
As an immediate corollary of Theorem~\ref{theo:mix} and Lemma~\ref{lem:r_tau}, we obtain the following result of~\citet{stoev07}.
\begin{theorem}\label{theo:crit_mix_stab}
Let $\{X(t), t\in\Z\}$ be a stationary max--stable process such that~\eqref{eq:stand_marginals} is satisfied. Then $X$ is mixing iff $\lim_{t\to\infty}r(t)=0$.
\end{theorem}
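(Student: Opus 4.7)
The plan is to read off Theorem~\ref{theo:crit_mix_stab} as a direct specialization of Theorem~\ref{theo:mix} using the identity in Lemma~\ref{lem:r_tau}. The theorem is of corollary-type, so the ``proof'' is really a bookkeeping exercise.

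First I would identify the parameter $l$ in the present setting. Since the marginals satisfy $\P[X(t)\leq y] = e^{-1/y}$ for $y>0$, we have $\P[X(0)>0]=1$ and $\P[X(0)\leq \eps]\to 0$ as $\eps\downarrow 0$, so $l = \essinf X(0) = 0$. Thus ``$a > l$'' becomes simply ``$a > 0$'', and Theorem~\ref{theo:mix} gives the equivalence of mixing with the condition $\lim_{t\to\infty}\tau_a(t)=0$ for every $a>0$.

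Next I would invoke Lemma~\ref{lem:r_tau} to rewrite $\tau_a(t) = r(t)/a$. Since $a>0$ is a fixed positive constant independent of $t$, for each single $a$ the two statements $\lim_{t\to\infty}\tau_a(t)=0$ and $\lim_{t\to\infty}r(t)=0$ are literally equivalent. Consequently, the universal quantifier ``for every $a>0$'' in Theorem~\ref{theo:mix} collapses: all conditions $\lim_{t\to\infty}\tau_a(t)=0$, $a>0$, are simultaneously equivalent to the single condition $\lim_{t\to\infty}r(t)=0$.

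Chaining these two equivalences produces the claimed biconditional, and the proof is complete. There is no real obstacle; the only point worth being explicit about is the computation $l=0$ under the standard Fr\'echet normalization~\eqref{eq:stand_marginals}, so that Theorem~\ref{theo:mix} applies with $a$ ranging over $(0,\infty)$, which is precisely the range on which $\varrho(t)$ and hence $r(t)$ were defined in~\eqref{eq:def_varrho}.
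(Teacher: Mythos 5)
Your proposal is correct and follows essentially the same route as the paper, which presents this theorem as an immediate corollary of Theorem~\ref{theo:mix} combined with Lemma~\ref{lem:r_tau}. The only detail you add beyond the paper's (proofless) statement is the explicit verification that $l=\essinf X(0)=0$ under the normalization~\eqref{eq:stand_marginals}, which is a worthwhile point to make and is computed correctly.
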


Before stating our next result, recall that the (upper) asymptotic density of a set $D\subset\N$ is defined as $\limsup_{n\to\infty}\lambda(D_n)/n$, where $\lambda(D_n)$ is the number of elements in the set $D_n=D\cap \{1,\ldots,n\}$.  Note also that the sequence $r(t)$ is positive--definite (see~\cite{schlather_tawn03} or~\cite{stoev07}) and hence, by Bochner's theorem, there is a symmetric with respect to $0$ measure $\mu$ on $[-\pi,\pi]$  (called the spectral measure of $X$) such that $r(t)=\int_{-\pi}^{\pi}e^{itx}d\mu(x)$ for all $t\in\Z$.
\begin{theorem}\label{theo:crit_erg_stab}
Let $\{X(t), t\in\Z\}$ be a stationary max--stable process such that~\eqref{eq:stand_marginals} is satisfied. Then the following conditions are equivalent:
\begin{enumerate}
\item \label{p:1}  $X$ is ergodic.
\item \label{p:2}  $X$ is weakly mixing.
\item \label{p:3} $\lim_{n\to\infty}\frac{1}{n} \sum_{t=1}^n r(t)=0$.
\item \label{p:4} There is a set $D\subset \N$ of asymptotic density $0$ such that $\lim_{t\to+\infty, t\notin D} r(t)=0$.
\item \label{p:4a} For every $\eps>0$ there is a set $D_{\eps}\subset \N$ of asymptotic  density at most $\eps$ such that $\lim_{t\to+\infty, t\notin D_{\eps}} r(t)=0$.
\item \label{p:5} The spectral measure of $X$  has no atom at $0$.
\end{enumerate}
\end{theorem}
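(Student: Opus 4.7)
\emph{Plan.} The first three conditions \ref{p:1}, \ref{p:2}, \ref{p:3} are equivalent as an immediate consequence of Theorem~\ref{theo:erg} combined with Lemma~\ref{lem:r_tau}: since $\tau_a(t)=r(t)/a$ for every $a>0$, the Cesaro condition on $\tau_a$ collapses to condition~\ref{p:3}. The remaining task is to link~\ref{p:3} to the characterizations~\ref{p:4}, \ref{p:4a}, and~\ref{p:5}.

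For \ref{p:3}$\Leftrightarrow$\ref{p:5}, the natural route is to insert the Bochner representation $r(t)=\int_{-\pi}^{\pi}e^{itx}d\mu(x)$ into the Cesaro average and apply Fubini:
$$
\frac{1}{n}\sum_{t=1}^n r(t)=\int_{-\pi}^{\pi}\left(\frac{1}{n}\sum_{t=1}^n e^{itx}\right)d\mu(x).
$$
The inner average is bounded by $1$ in modulus and converges pointwise to $\mathbf{1}_{\{0\}}(x)$ as $n\to\infty$, so dominated convergence yields $\frac{1}{n}\sum_{t=1}^n r(t)\to\mu(\{0\})$, giving the equivalence.

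The trivial direction \ref{p:4}$\Rightarrow$\ref{p:4a} is to set $D_\eps=D$. For \ref{p:4a}$\Rightarrow$\ref{p:3}, split the Cesaro sum according to membership in $D_\eps$: since $r\leq 1$, and choosing $T$ so large that $r(t)\leq\eta$ for $t\geq T$ with $t\notin D_\eps$, one obtains
$$
\frac{1}{n}\sum_{t=1}^n r(t)\leq\frac{|D_\eps\cap\{1,\ldots,n\}|}{n}+\frac{T}{n}+\eta,
$$
so that $\limsup_{n\to\infty}\frac{1}{n}\sum_{t=1}^n r(t)\leq\eps+\eta$; letting $\eta\to 0$ and then $\eps\to 0$ gives~\ref{p:3}. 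The implication \ref{p:4}$\Rightarrow$\ref{p:3} is the special case $\eps=0$ of this estimate, so the only remaining piece is \ref{p:3}$\Rightarrow$\ref{p:4}.

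The main obstacle is this last implication, which is a Koopman--von Neumann-type statement exploiting that $r(t)\in[0,1]$ is bounded and nonnegative. The plan is a standard diagonal construction: set $D_k=\{t\in\N:r(t)>1/k\}$; a Markov-type inequality applied to the Cesaro mean, namely $|D_k\cap\{1,\ldots,n\}|/(kn)\leq\frac{1}{n}\sum_{t=1}^n r(t)$, shows that each $D_k$ has asymptotic density $0$. Choose $N_1<N_2<\cdots$ with $|D_k\cap\{1,\ldots,n\}|/n\leq 1/k$ for all $n\geq N_k$, and define $D=\bigcup_{k}\bigl(D_k\cap[N_k,N_{k+1})\bigr)$. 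A short verification confirms that $D$ has asymptotic density $0$ and that $r(t)\to 0$ along the complement of $D$, establishing~\ref{p:4} and closing the circle of equivalences.
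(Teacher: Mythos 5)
Your proposal is correct and follows essentially the same route as the paper: conditions \ref{p:1}--\ref{p:3} via Theorem~\ref{theo:erg} and Lemma~\ref{lem:r_tau}, and \ref{p:3}$\Leftrightarrow$\ref{p:5} via the Bochner representation and bounded convergence. The only difference is that for the equivalence of \ref{p:3}, \ref{p:4}, \ref{p:4a} the paper simply cites Lemma~1 of Rosi\'nski and \.Zak, whereas you supply the standard Koopman--von Neumann argument (Markov inequality for the sets $D_k=\{t:r(t)>1/k\}$ plus a diagonal choice of thresholds $N_k$) in full; your version of that argument is sound.
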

\begin{proof}
Using Theorem~\ref{theo:erg} and taking into account Lemma~\ref{lem:r_tau}, we see that Conditions~\ref{p:1}, \ref{p:2}, \ref{p:3} are equivalent. Recall that $r(t)\in[0,1]$. It is well known  that this implies that Conditions~\ref{p:3}, \ref{p:4}, \ref{p:4a} are equivalent, see e.g.~\cite[Lemma~1]{rosinski_zak97}. The fact that Condition~\ref{p:3} and Condition~\ref{p:5} are equivalent can be proved as follows. Note that for $t\in[-\pi,\pi]$,
$$
\lim_{n\to\infty}\frac 1n\sum_{t=1}^n e^{itx}=1_{\{0\}}(x),
$$
Note also that $\left|\frac 1n\sum_{t=1}^n e^{itx}\right|\leq 1$. By the bounded convergence theorem, we have, as $n\to\infty$,
$$
\frac{1}{n} \sum_{t=1}^n r(t)=\int_{-\pi}^{\pi}\left( \frac 1n\sum_{t=1}^n e^{itx}\right)d\mu(x)\to \int_{-\pi}^{\pi}1_{\{0\}}(x)d\mu(x)=\mu(\{0\}).
$$
This completes the proof.
\end{proof}

A condition for ergodicity of max--stable processes which is difficult to verify was given in~\cite[Theorem~3.2]{stoev07}. Later, the result of~\cite{stoev07} was used in~\cite{kabluchko_extremes} to show that a max--stable process is ergodic iff the flow generating its spectral representation has no positive recurrent component.
For symmetric $\alpha$--stable processes, a measure of dependence similar to $r(t)$ is called codifference, see~\cite{samorodnitsky_taqqu_book}. Theorem~\ref{theo:crit_mix_stab} and  Theorem~\ref{theo:crit_erg_stab}  are  max--stable counterparts of the $\alpha$--stable results of~\cite{rosinski_zak96,rosinski_zak97}.  It is also interesting to compare Condition~\ref{p:5} of Theorem~\ref{theo:crit_erg_stab} to a classical result of~\cite{maruyama49} saying that a stationary Gaussian sequence is ergodic iff its spectral measure has no atoms (and not only no atom at $0$). Let us stress that in sharp contrast to the Gaussian case, the knowledge of $r(t)$ (or, equivalently, $\mu$) does not  determine the law of the max--stable process $X$ completely. Nevertheless, ergodicity and mixing of $X$ can be characterized in terms of $r$ only.

\subsection{Ergodicity and mixing of Brown--Resnick processes}
Let us mention an application of Theorem~\ref{theo:crit_erg_stab} to Brown--Resnick processes, a class of max--stable processes which was introduced in~\cite{kabluchko_schlather_dehaan07} and which is defined as follows. Let $W_i$, $i\in\N$, be independent copies of a zero--mean stochastically continuous Gaussian process $\{W(t), t\in\R\}$ with stationary increments, $W(0)=0$, and variance $\sigma^2(t)$. Independently, let $\{U_i,i\in\N\}$ be a Poisson point process on $\R$ with intensity $e^{-x}$. Then the process
\begin{equation}\label{eq:def_brown_resnick}
X(t)=\max_{i\in\N}e^{U_i+W_i(t)-\sigma^2(t)/2}
\end{equation}
is max--stable with marginals satisfying~\eqref{eq:stand_marginals}. It was shown in~\cite{kabluchko_schlather_dehaan07} that $X$ is stationary and that the dependence function of $X$ is given by
\begin{equation}\label{eq:varrho_eq_bar_phi}
r(t)=\bar \Phi (\sigma(t)/2),
\end{equation}
where $\bar \Phi(z)=(2\pi)^{-1/2} \int_{z}^{\infty} e^{-x^2/2}dx$ is the tail of the standard Gaussian distribution. By a continuous--time version of Theorem~\ref{theo:crit_mix_stab}, this implies that the process $X$ is mixing iff $\lim_{t\to\infty}\sigma^2(t)=\infty$. This fact was noted in~\cite{kabluchko_schlather_dehaan07}, whereas a characterization of ergodicity remained open. Now we are able to fill this gap.
\begin{proposition}\label{prop:crit_erg_br}
Let $X$ be as in~\eqref{eq:def_brown_resnick}. Then the following conditions are equivalent:
\begin{enumerate}
\item \label{p:br1} $X$ is ergodic.
\item \label{p:br2} There is a measurable set $D\subset \R_+$ of asymptotic density $0$ such that
$\lim_{t\to+\infty, t\notin D} \sigma^2(t)=\infty$.
\item \label{p:br3} For every $\eps>0$ there is a measurable set $D_{\eps}\subset \R_{+}$ of density at most $\eps$ such that $\lim_{t\to+\infty, t\notin D_{\eps}} \sigma^2(t)=\infty$.
\end{enumerate}
\end{proposition}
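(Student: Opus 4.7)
The plan is to reduce the proposition directly to Theorem~\ref{theo:crit_erg_stab}. The Brown--Resnick process $X$ is stochastically continuous (since $W$ is), so by the remark following Theorem~\ref{theo:erg}, Theorem~\ref{theo:crit_erg_stab} applies verbatim in continuous time, with the obvious Lebesgue-measure analogue of asymptotic density on $\R_+$. Formula~\eqref{eq:varrho_eq_bar_phi} expresses the dependence function as $r(t)=\bar\Phi(\sigma(t)/2)$, so it remains only to translate the conditions of Theorem~\ref{theo:crit_erg_stab} that are phrased in terms of $r$ into conditions phrased in terms of $\sigma^2$.

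The crux is the elementary observation that $\bar\Phi\colon[0,\infty)\to(0,1/2]$ is a continuous strictly decreasing bijection with $\lim_{z\to\infty}\bar\Phi(z)=0$. Consequently, for any measurable $D\subset\R_+$,
$$
\lim_{t\to\infty,\, t\notin D}r(t)=0 \quad\Longleftrightarrow\quad \lim_{t\to\infty,\, t\notin D}\sigma^2(t)=\infty.
$$
Applying this with a set $D$ of asymptotic density $0$ shows that Condition~\ref{p:4} of Theorem~\ref{theo:crit_erg_stab} is equivalent to Condition~\ref{p:br2} of the proposition, and applying it with $D_\eps$ of asymptotic density at most $\eps$ shows that Condition~\ref{p:4a} is equivalent to Condition~\ref{p:br3}. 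Combined with the equivalences \ref{p:1}$\Leftrightarrow$\ref{p:4}$\Leftrightarrow$\ref{p:4a} of Theorem~\ref{theo:crit_erg_stab}, this yields the proposition.

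I do not expect any genuine obstacle. The only point requiring care is the passage to continuous time: one must verify that the proof of Theorem~\ref{theo:crit_erg_stab} (which ultimately rests on Theorem~\ref{theo:erg} and the Bochner-type argument in the spectral step) carries over to $t\in\R$ with density interpreted via Lebesgue measure. Both pieces are standard, so the translation is routine once stochastic continuity of $X$ is recorded.
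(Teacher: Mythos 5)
Your proposal is correct and follows the same route as the paper, which simply cites \eqref{eq:varrho_eq_bar_phi} together with a continuous--time version of Theorem~\ref{theo:crit_erg_stab}; you merely spell out the translation step (monotonicity of $\bar\Phi$ turning $r(t)\to 0$ off $D$ into $\sigma^2(t)\to\infty$ off $D$) that the paper leaves implicit.
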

\begin{proof}
The statement follows from~\eqref{eq:varrho_eq_bar_phi} and a continuous--time version of Theorem~\ref{theo:crit_erg_stab}.
\end{proof}

We complete this section with an example which shows that Brown--Resnick processes can exhibit a rather exotic behavior.
\begin{proposition}\label{prop:counter_br}
There exists a Brown--Resnick process which is ergodic but non--mixing.
\end{proposition}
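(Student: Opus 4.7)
The plan is to exhibit an explicit Brown--Resnick process. First, by (the continuous--time version of) Theorem~\ref{theo:crit_mix_stab} and Proposition~\ref{prop:crit_erg_br}, combined with~\eqref{eq:varrho_eq_bar_phi}, it suffices to construct a stochastically continuous zero--mean Gaussian process $W$ with stationary increments, $W(0)=0$, whose variance function $\sigma^2$ stays bounded along some sequence $t_k\to\infty$ (so that $r(t_k)=\bar\Phi(\sigma(t_k)/2)$ does not vanish and $X$ is non--mixing) while $\sigma^2(t)\to\infty$ off a Lebesgue set of density zero (giving ergodicity).

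I will take the random Fourier series
\[
W(t)=\sum_{n=1}^{\infty}\bigl(Y_n(\cos(2\pi t/T^n)-1)+Z_n\sin(2\pi t/T^n)\bigr),
\]
where $T\geq 2$ is a fixed integer and $(Y_n),(Z_n)$ are independent standard Gaussians. Each summand is a zero--mean Gaussian process with $W(0)=0$, stationary increments, and variance $2(1-\cos(2\pi t/T^n))$; the bound $1-\cos(2\pi t/T^n)\leq\min(2,2\pi^2t^2/T^{2n})$ gives $L^2$-- and a.s.~convergence as well as stochastic continuity. The variogram is then $\sigma^2(t)=2\sum_{n=1}^{\infty}(1-\cos(2\pi t/T^n))$.

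For non--mixing, evaluate at $t_k=T^k$. Since $T^{k-n}$ is a positive integer for $n\leq k$, those cosines equal $1$ and the corresponding terms vanish; for $n>k$ one has $1-\cos(2\pi T^{k-n})\leq 2\pi^2T^{2(k-n)}$, so $\sigma^2(T^k)\leq 4\pi^2/(T^2-1)$ uniformly in $k$. Thus $\sigma^2(t)\not\to\infty$, and Theorem~\ref{theo:crit_mix_stab} yields non--mixing.

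The heart of the argument is ergodicity, for which it suffices to prove that $B_M:=\{t>0:\sigma^2(t)\leq M\}$ has Lebesgue density $0$ for every $M$. I will fix $N$ and take $U$ uniform on $I_N:=[T^N,T^{N+1})$; since $|I_N|/T^n=T^{N-n}(T-1)$ is a positive integer for $n\leq N$, the random variable $(U/T^n)\bmod 1$ is exactly uniform on $[0,1)$, whence $\E[1-\cos(2\pi U/T^n)]=1$. A product--to--sum identity gives $\E[\cos(2\pi U/T^m)\cos(2\pi U/T^n)]=0$ for $m<n\leq N$ (both frequencies $T^{n-m}\pm 1$ are nonzero integers), so the first $N$ terms of $\sigma^2(U)$ are pairwise uncorrelated with mean $2$ and variance $2$. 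Chebyshev then delivers $|B_M\cap I_N|/|I_N|=O(1/N)$, and a dominated--convergence argument over the partition $\R_+=\bigsqcup_N I_N$ yields density $0$ for $B_M$. Combined with the pointwise estimate $r(t)\leq 1_{B_M}(t)+\bar\Phi(M/2)$, this gives $\frac 1T\int_0^T r(t)\,dt\to 0$, i.e., ergodicity by Condition~\ref{p:3} of (the continuous--time) Theorem~\ref{theo:crit_erg_stab}. The principal obstacle is precisely this Chebyshev--equidistribution step; the geometric choice $\lambda_n=2\pi/T^n$ is what makes the Fourier modes pairwise uncorrelated under the uniform measure on each $I_N$.
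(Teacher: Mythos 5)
Your proposal is correct, and it is built around essentially the same example as the paper: a variogram of the form $\sigma^2(t)=c\sum_{n\geq 1}(1-\cos(2\pi t/T^n))$ (the paper takes $T=2$, $c=1$), with the identical computation showing that $\sigma^2(T^k)$ stays bounded along $t_k=T^k$, hence non--mixing by Theorem~\ref{theo:crit_mix_stab}. The ergodicity half, however, follows a genuinely different route. The paper argues deterministically: it introduces the sets $A_k$ of points within relative distance $\eps$ of a multiple of $2^k$, uses the integral of $S_n(t)=\sum_{k\leq n}1_{B_{k,n}}(t)$ over $[2^n,2^{n+1}]$ to bound how often fewer than $n/2$ of the first $n$ summands can exceed $c(\eps)$, and extracts an exceptional set $D_\eps$ of density $O(\eps)$ off which $\sigma^2(t)\geq c(\eps)n/2$, feeding this into Condition~\ref{p:br3} of Proposition~\ref{prop:crit_erg_br}. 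You instead randomize: with $U$ uniform on $I_N=[T^N,T^{N+1})$ the first $N$ Fourier modes are exactly equidistributed and pairwise uncorrelated (both facts hinging on $|I_N|$ being an integer multiple of $T^n$ for $n\leq N$), so the truncated sum has mean $2N$ and variance $2N$, and Chebyshev gives $\lambda(\{t\in I_N:\sigma^2(t)\leq M\})/|I_N|=O(1/N)$; you then conclude through the Ces\`aro criterion of Theorem~\ref{theo:crit_erg_stab} rather than through Proposition~\ref{prop:crit_erg_br}. Both arguments are complete. Yours yields an explicit quantitative rate $O(1/N)$ for the sublevel sets and dispenses with the parameter $\eps$ and the constant $c(\eps)$, at the price of relying on the exact orthogonality of the modes on $T$--adic blocks (the paper's covering argument is more robust to perturbing the frequencies). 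Two cosmetic points: the explicit random Fourier series realizing $W$ is a nice touch but unnecessary, since negative definiteness of $\sigma^2$ already guarantees existence of the Gaussian process; and the pointwise bound should read $r(t)\leq 1_{B_M}(t)+\bar\Phi(\sqrt{M}/2)$, since $\sigma(t)\geq\sqrt{M}$ off $B_M=\{\sigma^2\leq M\}$ --- harmless, as $M\to\infty$ at the end anyway.
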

\begin{proof}
Define a function $\sigma^2(t)$ by
\begin{equation}\label{eq:def_sigma_counter}
\sigma^2(t)=\sum_{k=1}^{\infty}(1-\cos(2\pi t/2^{k})).
\end{equation}
The elementary inequality $1-\cos z\leq z^2/2$ shows that the series on the right-hand side of~\eqref{eq:def_sigma_counter} converges uniformly on compacts. Hence, the function $\sigma^2$ is finite and continuous. Further, the function $\sigma^2$ is negative definite (see e.g.~\cite{kabluchko_schlather_dehaan07}) since the function $t\mapsto 1-\cos(at)$ is negative definite for every $a$.  We claim that the Brown--Resnick process $X$ corresponding to $\sigma^2$ is ergodic but non--mixing.

First we show that $X$ is non--mixing. Take $t=2^n$ for some $n\in\N$. Then $1-\cos(2\pi t/2^k)=0$ for $k=1,\ldots,n$, so that the first $n$ summands on the right--hand side of~\eqref{eq:def_sigma_counter} vanish. If $k=n+l$ for some $l\in\N$, then the  inequality $1-\cos z\leq z^2/2$ implies that $1-\cos(2\pi t/2^k)\leq 2\pi^2 2^{-2l}$. Therefore,
$$
\sigma^2(2^n)\leq 2\pi^2 \sum_{l=1}^{\infty}2^{-2l}=2\pi^2/3.
$$
By a result of~\cite{stoev07,kabluchko_schlather_dehaan07} mentioned above, this implies that $X$ is non--mixing.

Now let us show that $X$ is ergodic. Take some small $\eps>0$. We will construct a set $D_{\eps}$ satisfying Condition~\ref{p:br3} of Proposition~\ref{prop:crit_erg_br}.
For every $k\in\N$ define a set $A_k=\cup_{i\in\Z} [(i-\eps) 2^k, (i+\eps) 2^k]$.
For $n,k\in\N$ let $B_{k,n}=[2^n,2^{n+1}]\backslash A_k$.
For $t\in[2^n,2^{n+1}]$ we define a function $S_n(t)=\sum_{k=1}^n 1_{B_{k,n}}(t)$.
Let $D_{n,\eps}=\{t\in[2^n,2^{n+1}]: S_n(t)<(1/2)n\}$. We claim that the set $D_{\eps}=\cup_{n\in\N}D_{n,\eps}$ satisfies the requirements of Condition~\ref{p:br3} of Proposition~\ref{prop:crit_erg_br}.

First let us estimate the asymptotic density of $D_{\eps}$. To this end, we show that
\begin{equation}\label{eq:lambda_D_n_eps}
m_{n,\eps}:=\lambda(D_{n,\eps})\leq 6\eps\cdot 2^n,
\end{equation}
where $\lambda$ is the Lebesgue measure.
An easy calculation shows that $\lambda(B_{k,n})=(1-2\eps)2^{n}$. It follows that
\begin{equation}\label{eq:int_Sn}
\int_{2^n}^{2^{n+1}}S_n(t)dt=(1-2\eps)2^n n.
\end{equation}
To prove~\eqref{eq:lambda_D_n_eps}, assume that $m_{n,\eps}>6\eps\cdot 2^n$. Estimating $S_n(t)$ by $(1/2)n$ if $t\in D_{n,\eps}$, and by $n$ otherwise, we obtain
$$
\int_{2^n}^{2^{n+1}}S_n(t)dt\leq (1/2)n m_{n,\eps}+(2^n-m_{n,\eps})n= 2^nn -(1/2)m_{n,\eps}n < 2^n n- 3\eps \cdot2^nn.
$$
However, this contradicts~\eqref{eq:int_Sn}. This completes the proof of~\eqref{eq:lambda_D_n_eps}.

Now, the upper density of $D_{\eps}$ can be estimated as follows. Let $t\in[2^n,2^{n+1}]$ for some $n\in\N$. Then we have
$$
\lambda([0,t]\cap D_{\eps})\leq \lambda([0, 2^{n+1}]\cap D_{\eps})=\sum_{k=1}^n \lambda(D_{k,\eps})\leq 6\eps \sum_{k=1}^n 2^k <12 \eps 2^{n}\leq 12\eps t.
$$
It follows that
$$
\limsup_{t\to+\infty}t^{-1}\lambda([0,t]\cap D_{\eps})\leq 12\eps,
$$
which proves that the asymptotic density of $D_{\eps}$ does not exceed $12\eps$.

It remains to prove that
\begin{equation}\label{eq:lim_t_notin_D}
\lim_{t\to+\infty, t\notin D_{\eps}}\sigma^2(t)=\infty.
\end{equation}
For every $t\notin A_k$, the distance from the number $t/2^k$ to $\Z$ is at least $\eps$. Therefore, we have $1-\cos(2\pi t/2^k)>c(\eps)$ for all $t\notin A_k$, where $c(\eps)>0$ is some constant not depending on $k$.
Now take some $t\in[2^n,2^{n+1}]$. If additionally $t\notin D_{n,\eps}$, then
$$
\sigma^2(t)\geq \sum_{k=1}^n (1-\cos(2\pi t/2^k))\geq c(\eps) \sum_{k=1}^n 1_{B_{k,n}}(t)=c(\eps) S_n(t)\geq (1/2)c(\eps)n,
$$
where the last inequality follows from the definition of the set $D_{n,\eps}$. It follows that there is a constant $C$ such that for every $t>2$, $t\notin D_{\eps}$, we have
$\sigma^2(t)\geq C\log t$.
This completes the proof of~\eqref{eq:lim_t_notin_D} and the proof of the proposition.
\end{proof}

\subsection{Distance to the nearest particle in an ideal gas}
In this section we apply Theorem~\ref{theo:mix} to show that a process studied by \citet{penrose88,penrose91,penrose92} is mixing. This process describes the distance from the origin to the nearest particle in a gas of independent Brownian particles, and is defined as follows. Consider an infinite number of particles starting at the points of a Poisson point process with unit intensity on $\R^d$ and performing independently of each other Brownian motions. In other words, the position of the $i$--th particle at time $t$ is given by $U_i+W_i(t)$, where $\{U_i, i\in\N\}$ is a homogeneous Poisson point process on $\R^d$, and $W_i$, $i\in\N$, are independent copies of an $\R^d$--valued Brownian motion $\{W(t), t\in\R\}$.  Denote by $\|\cdot\|_2$ the Euclidian norm on $\R^d$. Then we are interested in the process $\{X(t), t\in\R\}$ defined by
\begin{equation}\label{def:semi_min_stab}
X(t)=\min_{i\in\N} \|U_i+W_i(t)\|_2.
\end{equation}
By~\cite{penrose92}, the process $X$ is stationary and \textit{min--i.d.}\ (which means that $-X$ is \textit{max--i.d.}). Note that the min--i.d.\ property follows from the fact that for every $n\in\N$, the Poisson point process with constant intensity $1$ can be represented as a union of $n$ independent Poisson point processes with constant intensity $1/n$.
\begin{theorem}\label{theo:penrose_mix}
The process $X$  defined in~\eqref{def:semi_min_stab} is mixing.
\end{theorem}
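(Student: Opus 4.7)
My plan is to apply Theorem~\ref{theo:mix} to the stationary max--i.d.\ process $Y(t) := -X(t)$. Since $X(0)$ is the distance to the nearest point of a unit Poisson process on $\R^d$, we have $\P[X(0) > R] = e^{-\mathrm{vol}(B(0,R))} > 0$ for every $R$, so $\esssup X(0) = +\infty$ and hence $\essinf Y(0) = -\infty$. Therefore Theorem~\ref{theo:mix} requires me to show that for every real $a$,
\begin{equation*}
\log \P[Y(0) \le a,\, Y(t) \le a] - \log \P[Y(0) \le a] - \log \P[Y(t) \le a] \longrightarrow 0 \qquad (t\to\infty).
\end{equation*}
Setting $b = -a$, this is the same statement with $\le a$ replaced by $\ge b$, for every $b \in \R$. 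For $b \le 0$ it is trivial since $X \ge 0$, so I fix $b > 0$.

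To compute the three probabilities I use the Poisson void probability. View $\{(U_i, W_i)\}_{i \in \N}$ as a Poisson point process on $\R^d \times C(\R,\R^d)$ with intensity $dU \otimes \mu_W$, where $\mu_W$ is Wiener measure on paths starting at $0$. Write $v = \mathrm{vol}(B(0,b))$ and let $g_t$ be the Gaussian density of $W(t)$. The event $\{X(0) \ge b\}$ says that no Poisson point $(U,W)$ has $U \in B(0,b)$, so $\P[X(0) \ge b] = e^{-v}$, and by stationarity $\P[X(t) \ge b] = e^{-v}$. The joint event $\{X(0) \ge b,\, X(t) \ge b\}$ says no point satisfies $U \in B(0,b)$ or $U + W(t) \in B(0,b)$; applying inclusion--exclusion to the intensity of this union gives
\begin{equation*}
\P[X(0) \ge b,\, X(t) \ge b] = \exp\!\bigl(-2v + I(t)\bigr), \qquad I(t) = \int_{B(0,b)} \int_{B(0,b)} g_t(y - U)\, dy\, dU.
\end{equation*}
Taking logarithms, the dependence function for $Y$ reduces exactly to $I(t)$.

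The final step is to show $I(t) \to 0$. Bounding $g_t(y - U) \le g_t(0) = (2\pi t)^{-d/2}$ in the inner integral gives $I(t) \le v^2 (2\pi t)^{-d/2} \to 0$, which completes the proof via Theorem~\ref{theo:mix}. The argument is short and essentially routine; the only piece requiring genuine care is the sign flip that passes from the min--i.d.\ process $X$ to the max--i.d.\ process $Y = -X$ so that Theorem~\ref{theo:mix} is applicable, together with the observation that $\esssup X(0) = +\infty$ (so that every real threshold must be treated, even though $X$ takes nonnegative values).
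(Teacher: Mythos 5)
Your proposal is correct and follows essentially the same route as the paper: apply Theorem~\ref{theo:mix} to $-X$, use Poisson void probabilities to compute the dependence function exactly as the overlap integral $\int_{B(0,b)}\P[x+W(t)\in B(0,b)]\,dx$, and show it tends to $0$. The only cosmetic differences are that the paper organizes the computation via two independent events plus the invariance of Lebesgue measure under the heat semigroup rather than direct inclusion--exclusion on the intensity, and bounds the integral by $V(b)\,\P[W(t)\in B(0,2b)]$ rather than by the sup of the Gaussian density; both variants are equally valid.
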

\begin{proof}
In view of Theorem~\ref{theo:mix} applied to the process $-X$, we need to show that for every $a>0$, we have $\lim_{t\to\infty}\tau_a(t)=0$, where
\begin{equation}\label{eq:tau_ta}
\tau_a(t)=\log \P[X(0)\geq a, X(t)\geq a]-2\log \P[X(0)\geq a].
\end{equation}
We are going to compute the dependence coefficient $\tau_a(t)$.  Define random events $A_1$ and $A_2$ by
\begin{align*}
A_1&=\{\nexists i\in\N: \|U_i\|_2 <  a\},\\
A_2&=\{\nexists i\in\N:  \|U_i\|_2\geq a, \|U_i+W_i(t)\|_2 < a\}.
\end{align*}
Let $B(a)$ be the $d$--dimensional ball of radius $a$ around the origin, and denote by $V(a)$ its volume. By the definition of the homogeneous Poisson process, the number of $i\in\N$ such that $\|U_i\|_2<a$ is Poisson distributed with mean $V(a)$. Thus,
\begin{equation}\label{eq:p_a1}
\P[X(0)\geq a]=\P[A_1]=e^{-V(a)}.
\end{equation}
Further, the number of $i\in\N$ having the property $\|U_i\|_2\geq a$ and  $\|U_i+W_i(t)\|_2<a$ is Poisson distributed with parameter $\int_{\R^d\backslash B(a)} \P[x+W(t)\in B(a)]dx$.
Therefore,
\begin{equation}\label{eq:p_a2}
\P[A_2]=\exp\left(-\int_{\R^d\backslash B(a)} \P[x+W(t)\in B(a)]dx\right).
\end{equation}
Since the Lebesgue measure is invariant with respect to the transition semigroup of the Brownian motion, we have
$$
\int_{\R^d}\P[x+W(t)\in B(a)]dx=V(a).
$$
Inserting this into~\eqref{eq:p_a2}, we obtain
\begin{equation}\label{eq:p_a2_1}
\P[A_2]=\exp\left(-V(a)+\int_{B(a)} \P[x+W(t)\in B(a)]d x\right).
\end{equation}
By the independence property of the Poisson point process, the events $A_1$ and $A_2$ are independent. Thus,
$$
\P[X(0)\geq a, X(t)\geq a]=\P[A_1\cap A_2]=\P[A_1]\P[A_2].
$$
Hence, by~\eqref{eq:p_a1} and~\eqref{eq:p_a2_1},
\begin{equation}\label{eq:P_X0_Xt_geq_a}
\P[X(0)\geq a, X(t)\geq a]=\exp\left(-2V(a)+\int_{B(a)} \P[x+W(t)\in B(a)]d x\right).
\end{equation}
Recalling that $\tau_a(t)$ was defined in~\eqref{eq:tau_ta} and using~\eqref{eq:p_a1} and~\eqref{eq:P_X0_Xt_geq_a}, we obtain
$$
\tau_a(t)=\int_{B(a)} \P[x+W(t)\in B(a)]dx.
$$
We can estimate
$$
0\leq \tau_a(t)\leq \int_{B(a)} \P[W(t)\in B(2a)]dx=V(a) \P[W(t)\in B(2a)].
$$
Clearly, the right--hand side goes to $0$ as $t\to\infty$. Thus, $\lim_{t\to\infty}\tau_a(t)=0$. By Theorem~\ref{theo:mix}, this completes the proof.
\end{proof}
\begin{remark}
Theorem~\ref{theo:penrose_mix} can easily be generalized. For example, one may let the particles move according to a $d$--dimensional L\'evy process, or to a $d$--dimensional fractional Brownian motion.
\end{remark}


\bibliographystyle{plainnat}
\bibliography{paper16_arxiv}
\end{document}